\newtheorem{theorem}{Theorem}
\newtheorem{lemma}{Lemma}
\newtheorem{observation}{Observation}
\newcommand{\eps}{\varepsilon}
\newcommand{\RR}{\mathbb{R}}
\def\A{{\mathcal A}}
\def\P{{\mathcal P}}
\def\Q{{\mathcal Q}}
\def\R{{\mathcal R}}
\def\V{{\mathcal V}}
\newcommand{\vol}{{\rm vol}}
\def\Prob{{\rm Prob}}
\def\etal{{et~al.}}
\def\ie{{i.e.}}
\def\eg{{e.g.}}
\newcommand{\later}[1]{}
\newcommand{\old}[1]{}
\title{\textsc{Perfect vector sets, properly overlapping partitions,
and largest empty box}}
\author{
Adrian Dumitrescu\thanks{
Department of Computer Science,
University of Wisconsin--Milwaukee, USA\@.
Email:~\texttt{dumitres@uwm.edu}}
\qquad
Minghui Jiang\thanks{
Department of Computer Science,
Utah State University, Logan, USA\@.
Email:~\texttt{mjiang@cc.usu.edu}}}
\begin{document}

\maketitle

\begin{abstract}
  We revisit the following problem (along with its higher dimensional variant):
Given a set $S$ of $n$ points inside an axis-parallel rectangle $U$ in the
plane, find a maximum-area axis-parallel sub-rectangle that is contained in $U$
but contains no points of $S$. 

(I) We present an algorithm that finds a large empty box 
amidst $n$ points in $[0,1]^d$: 
a box whose volume is at least  $\frac{\log{d}}{4(n + \log{d})}$
can be computed in $O(n+d \log{d})$ time.

(II) To better analyze the above approach, 
we introduce the concepts of perfect vector sets and properly overlapping
partitions, in connection to the minimum volume of a maximum empty box amidst
$n$ points in the unit hypercube $[0,1]^d$, and derive bounds on their sizes.

\medskip
\noindent
\textbf{\small Keywords}:
Largest empty box, 
Davenport-Schinzel sequence, 
perfect vector set,
properly overlapping partition,
qualitative independent sets and partitions,
discrepancy of a point-set,
van der Corput point set,
Halton-Hammersley point set,
approximation algorithm,
data mining.

\end{abstract}

\section{Introduction} \label{sec:intro}

Given an axis-parallel rectangle $U$ in the plane containing $n$ points,
{\sc Maximum Empty Rectangle} is the problem of computing a maximum-area
axis-parallel empty sub-rectangle contained in $U$. This problem is one of the
oldest in computational geometry, with multiple applications, \eg, in facility
location problems~\cite{NLH84}. In higher dimensions, finding the largest empty
box has applications in data mining, such as finding large gaps in a
multi-dimensional data set~\cite{EGLM03}.   

A \emph{box} in $\RR^d$, $d \ge 2$, is an open axis-parallel hyperrectangle
$(a_1,b_1)\times\cdots\times(a_d,b_d)$ with $a_i < b_i$ for $1 \le i \le d$.
Due to the fact that the volume ratio of any box inside another box is
invariant under scaling, the problem can be reduced to the case when the
enclosing box is a hypercube. Given a set $S$ of $n$ points in the unit
hypercube $U_d=[0,1]^d$, $d \ge 2$, an \emph{empty box} is a box empty of
points in $S$ and contained in $U_d$, and {\sc Maximum Empty Box} is the
problem of finding an empty box with the \emph{maximum} volume. Note that an
empty box of maximum volume must be \emph{maximal} with respect to inclusion.
Some planar examples of maximal empty rectangles are shown in Fig.~\ref{f1}.
All rectangles and boxes considered in this paper are axis-parallel.  
\begin{figure}[htb]
\centering\includegraphics[scale=0.67]{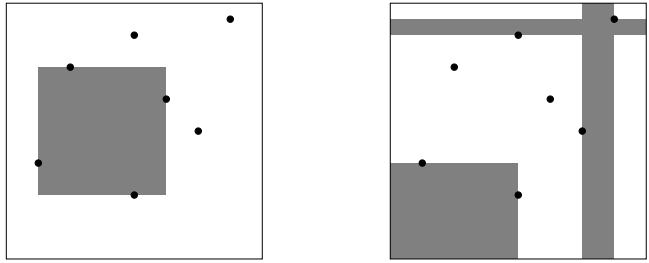}
\caption{A maximal empty rectangle supported by one point on each side
  (left), and three maximal empty rectangles supported by both points
  and sides of $[0,1]^2$ (right).}
\label{f1}
\end{figure}

According to an early result of Naamad, Lee, and Hsu~\cite{NLH84},
the number of maximal empty rectangles amidst $n$ points in the unit square
is $O(n^2)$ (and it is easy to exhibit tight examples);
as~such, the number of maximum empty rectangles
amidst $n$ points in the unit square is also $O(n^2)$.
Since then, this quadratic upper bound has been revisited
numerous times~\cite{AKM+87,AK90,AS87,AF86,CDL86,DS00,KMNS12,RT96}.
Only recently was the latter upper bound sharply reduced, to nearly linear, 
namely $O(n \log{n} \, 2^{\alpha(n)})$;
here $\alpha(n)$ is the extremely slowly growing inverse of Ackermann's
function\footnote{See \eg~\cite{SA95} for technical details on this and
other similar functions.}.
For any fixed $d \ge 2$,
the number of maximum empty boxes
amidst $n$ points in $U_d=[0,1]^d$, $d \ge 2$,
is always $O(n^d)$~\cite{KRSV08,DJ16a}
and sometimes $\Omega(n^{\lfloor d/2 \rfloor})$~\cite{DJ16a}.

Besides the number of maximum empty boxes, the volume of such boxes
is another parameter of interest.
Given a set $S$ of $n$ points in the unit hypercube $U_d=[0,1]^d$, 
where $d \geq 2$, let $A_d(S)$ be the maximum volume of an empty box
contained in $U_d$, and let $A_d(n)$ be the minimum value of $A_d(S)$
over all sets $S$ of $n$ points in $U_d$.
Rote and Tichy~\cite{RT96} proved that
$A_d(n)=\Theta\left(\frac{1}{n}\right)$ for any fixed $d \geq 2$.
From one direction, for any $d \ge 2$, we have
\begin{equation}\label{eq:upper}
A_d(n) < \left( 2^{d-1} \prod_{i=1}^{d-1} p_i \right) \cdot \frac{1}{n},
\end{equation}
where $p_i$ is the $i$th prime, as shown in~\cite{RT96,DJ13a}
using Halton-Hammersley generalizations~\cite{Hal60,Ham60}
of the van der Corput point set~\cite{C35a,C35b};
see also~\cite[Ch.~2.1]{Ma99}.

From the other direction,
by slicing the hypercube with $n$ parallel hyperplanes, each incident to one of
the $n$ points, the largest slice gives an empty box of volume at least
$\frac{1}{n+1}$,
and hence we have the lower bound $A_d(n) \geq \frac{1}{n+1}$ for each $d$. 
This trivial estimate can be improved using the following
inequality~\cite{DJ13a,DJ14} that relates $A_d(n)$ to $A_d(b)$
for fixed $d \ge 2$ and $b \ge 2$:
\begin{equation} \label{eq:adb}
A_d(n) \ge \big((b + 1) A_d(b) - o(1)\big) \cdot \frac1n.
\end{equation}
In particular, with $b=4$, the following bound\footnote{%
A weaker bound with $b=3$ was inadvertently labeled as an improvement
over this bound in~\cite{DJ14}.}
was obtained in~\cite{DJ13a}:
$$
A_d(n) \ge A_2(n) \ge \big(5 A_2(4) - o(1)\big) \cdot \frac1n
= \big(1.25 - o(1)\big) \cdot \frac1n.
$$

By exploiting the above observation of~\eqref{eq:adb}
in a more subtle and fruitful way, 
Aistleitner, Hinrichs, and Rudolf~\cite{AHR15} recently proved that
$A_d(\lfloor \log{d} \rfloor) =\Omega(1)$. It follows that the dependence
on $d$ in the volume bound is necessary, \ie, the maximum volume grows with the
dimension $d$.
As a consequence, the following lower bound is derived in~\cite{AHR15}:
\begin{equation} \label{eq:lower}
  A_d(n) \geq \frac{\log{d}}{4 (n + \log{d})}.
\end{equation}

Following this new development, we present an algorithm
that finds a large empty box amidst $n$ points in $[0,1]^d$,
whose volume is at least $\frac{\log{d}}{4(n + \log{d})}$,
in $O(n + d \log d)$ time.
Also, inspired by the technique of~\cite{AHR15},
we introduce the concepts of \emph{perfect vector sets}
and \emph{properly overlapping partitions} as tools for
bounding the minimum volume of a maximum empty box amidst $n$ points in the
unit hypercube $U_d=[0,1]^d$.
We show the equivalence of these two concepts,
then derive an exact closed formula for the maximum size of a family of
pairwise properly overlapping $2$-partitions of $[n]$, and obtain exponential
lower and upper bounds (in $n$) on the maximum size of a family of $t$-wise
properly overlapping $a$-partitions of $[n]$ for all $a \geq 2$ and $t \geq 2$.
These new concepts and corresponding bounds are connected to classical concepts
in extremal set theory such as Sperner systems and the LYM
inequality~\cite{Bo86}, and will likely see other applications. 

\paragraph{Notations.}
Let $[n]$ denote the set $\{1,2,\ldots,n\}$. 
For  $A \subset [n]$, $\overline{A} =[n] \setminus A$ denotes the complement of $A$. 
As usually, $\Theta, O, \Omega$ notation is used to describe the
asymptotic growth of functions.
When writing $f \sim g$, we ignore constant factors. 
The $\Omega^*$ notation is used to describe the asymptotic growth
of functions ignoring polynomial factors; if $1<c_1<c_2$ are two constants,
we frequently write $\Omega^*(c_2^n) = \Omega(c_1^n)$.

\section{A fast algorithm for finding a large empty box} \label{sec:algorithm}

We first give an efficient algorithm for finding a large empty box,
\ie, one whose volume is at least that guaranteed by equation~\eqref{eq:lower}. 
We essentially proceed as directed by the proof by Aistleitner~\etal~\cite{AHR15}.

\begin{theorem}\label{thm:large}
Given $n$ points in $[0,1]^d$, an empty box of volume
at least  $\frac{\log{d}}{4(n + \log{d})}$
can be computed in $O(n+d \log{d})$ time.
\end{theorem}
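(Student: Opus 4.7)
My plan is to make the existence proof of Aistleitner, Hinrichs, and Rudolf~\cite{AHR15} algorithmic, achieving the volume bound of~\eqref{eq:lower} within the $O(n + d \log d)$ time budget. The key idea is a two-stage approach: first, reduce the $n$-point instance to one with at most $b = \lfloor \log d \rfloor$ points via a bucket-based decomposition along one coordinate axis; then constructively execute the Aistleitner~\etal~argument, which guarantees an empty box of volume $\ge 1/4$ for the reduced instance.

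For the reduction, I would partition $[0,1]$ in dimension $x_1$ into $B = \lceil n/b \rceil$ equal-width buckets, and distribute the $n$ points into these $B$ buckets by their $x_1$-coordinate in a single $O(n)$ pass. By pigeonhole, the minimum-population bucket contains at most $\lfloor n/B \rfloor \le b$ points; identifying this slab $S^*$ and collecting its point set $P^*$ take another $O(n)$ time. The slab has width $1/B \ge b/(n+b) = \log d /(n + \log d)$ in dimension $x_1$ and full width in the remaining $d-1$ dimensions.

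For the second stage, I would rescale $S^*$ to a unit hypercube, turning $P^*$ into a configuration of at most $\lfloor \log d \rfloor$ points in $[0,1]^d$. By the Aistleitner~\etal~result $A_d(\lfloor \log d \rfloor) \ge 1/4$, an empty box of volume $\ge 1/4$ exists inside this rescaled hypercube. I would compute it by implementing their existence proof constructively: iterate over the $|P^*| \le \lfloor \log d \rfloor$ points and, at each step, shrink a running empty box (initialized to $[0,1]^d$) along a carefully chosen dimension so as to exclude the current point while minimizing the loss of volume. Because each step performs a scan of $O(d)$ coordinate information, the subroutine runs in $O(|P^*| \cdot d) = O(d \log d)$ time. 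Undoing the rescaling multiplies the output box's volume by $1/B$, yielding final volume at least $(1/4) \cdot \log d /(n+\log d) = \log d /(4(n+\log d))$. The total running time is $O(n)$ for the reduction plus $O(d \log d)$ for the subroutine, giving $O(n + d \log d)$.

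The main obstacle is to verify that the Aistleitner~\etal~existence proof admits an algorithmic implementation with only $O(d)$ work per processed point, so that the subroutine fits the $O(d \log d)$ budget. Since their argument is built on the inductive inequality~\eqref{eq:adb}, each inductive step should correspond to a constant number of linear passes over the $d$ dimensions, making this efficiency requirement plausible; the crux of the proof is to extract and implement the explicit dimension-selection rule from their argument, and to certify that iterating it preserves the $\ge 1/4$ volume guarantee while touching each of the at most $\log d$ points in $O(d)$ time.
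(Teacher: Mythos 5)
Your first stage (bucket decomposition along one axis, pick the lightest slab) matches the paper's reduction exactly, modulo minor parameter choices, and the resulting slab volume bound $\geq \log d/(n+\log d)$ is correct. The problem is your second stage, and you in fact flag the weak spot yourself: you need an \emph{explicit} rule that, given at most $\ell = \lfloor\log d\rfloor$ points in a rescaled unit cube, produces an empty box of volume $\geq 1/4$ in $O(d\log d)$ time. The rule you sketch --- iterate over the $\leq\ell$ points, and at each step shrink a running box along the dimension that minimizes the immediate volume loss --- cannot achieve the $1/4$ bound. Each single-dimension shrink past one point loses up to a factor of $2$ (if the point sits at the center in every coordinate), so after $\ell$ points the greedy box can have volume as small as $2^{-\ell}\approx 1/d$, not $\Omega(1)$. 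The inequality~\eqref{eq:adb} does not rescue a greedy sequential shrink either; it is an averaging/recursive bound, not a certificate for this procedure.

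The paper's argument is not a point-by-point shrink but a global pigeonhole on coordinates: encode the $\ell$ points as $d$ binary vectors of length $\ell$, one vector per coordinate (the $j$th bit of $\mathbf{v_i}$ records which half of the $i$th extent the $j$th point falls into). If some $\mathbf{v_i}$ is all-$0$ or all-$1$, halving $B$ along dimension $i$ gives an empty box of volume $\vol(B)/2$. Otherwise, there are fewer than $d$ non-constant binary vectors of length $\ell$ (since $2^\ell-2<d$), so two coordinates $i<j$ yield equal vectors; then the combination $01$ (or $10$) is uncovered, and the corresponding quarter of $B$ (lower half in dimension $i$, upper half in dimension $j$) is empty with volume $\vol(B)/4$. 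Building the $d$ vectors takes $O(d\ell)=O(d\log d)$ time and detecting a constant or duplicate vector takes $O(d)$ via radix sort, so the budget is met. Your write-up recognizes this as the crux but does not supply this collision argument, and the greedy rule you substitute in its place fails, so there is a genuine gap.
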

\begin{proof}
Let $\ell=\lfloor \log{d} \rfloor$, and $k= \lfloor n/(\ell+1) \rfloor$.
First partition the $n$ points in $U_d$ into $k+1$ boxes of equal volume
by using parallel hyperplanes orthogonal to first axis.
Select the box, say $B$, containing the fewest points, at most $\ell$;
we may assume that $B$ contains exactly $\ell$ points in its interior.  
We have
\begin{equation} \label{eq:1}
\vol(B) =\frac{1}{k+1} \geq 
\frac{\ell+1}{n+\ell+1} \geq \frac{\log{d}}{n+\log{d}}. 
\end{equation}
Clearly, $B$ can be found in $O(n)$ time by examining the first
coordinate of each point and using the integer floor function. 
Assume that $B=[a,b] \times [0,1]^{d-1} = \prod_{i=1}^d [a_i,b_i]$.

Second, encode the $\ell$ points in $B$ by $d$ binary vectors of
length $\ell$,
$\V=\{\mathbf{v_1},\ldots,\mathbf{v_d}\}$, one for each coordinate:
The $j$th bit of the $i$th vector, for $j=1,\ldots,\ell$,
is set to $0$ or $1$ depending on whether the $i$th coordinate
of the $j$th point is $\leq (a_i +b_i)/2$ or $> (a_i +b_i)/2$, respectively.
Clearly, there are at most $2^\ell$ distinct binary vectors of length $\ell$. 

If there is a zero-vector in $\V$, say, $\mathbf{v_i}$,
all points are contained in the box
$$ \prod_{k<i}[a_k,b_k] \times \left[a_i,\frac{a_i +b_i}{2}\right] \times \prod_{i<k}[a_k,b_k], $$
and so the complementary box of volume $\vol(B)/2$ is empty; the same
argument holds if one of the $d$ vectors in $\V$ has
all coordinates equal to $1$. 
If neither of these cases occurs, since $2^\ell -2<d$, 
then by the pigeonhole principle there is pair of equal vectors, 
say $\mathbf{v_i},\mathbf{v_j}$, with $i<j$: \ie,
$\mathbf{v_i}[r] =\mathbf{v_j}[r]$ for each $r \in [\ell]$. 
In particular, if $\alpha \in \{01,10\}$, then 
$\mathbf{v_i}[r] \, \mathbf{v_j}[r] \neq \alpha $, for each $r \in [\ell]$; 
we say that the binary combination (string) $\alpha$ is \emph{uncovered} 
by this pair of vectors.
By construction, an uncovered combination, say $01$, yields an empty ``quarter'' of $B$:
$$ \prod_{k<i}[a_k,b_k] \times \left[a_i,\frac{a_i +b_i}{2}\right] \times \prod_{i<k<j}[a_k,b_k]
\times \left[\frac{a_j +b_j}{2},b_j\right] \times \prod_{j<k}[a_k,b_k]. $$
Its volume is obviously $\vol(B)/4$, thus in all cases one finds an empty box of volume
$\vol(B)/4$. 

The $d$ binary vectors of size $\ell$ can be viewed as $d$ integers in
the range from $0$ to $d$. These can be assembled in time $O(d \ell)=O(d \log{d})$.
Finding a pair of duplicate vectors is easily done by sorting the $d$ integers, 
say, using radix sort in $O(d)$ time~\cite{CLRS09}; or by other method in time 
$O(d \log{d})$. Use the uncovered binary combination to output the corresponding 
empty box of $U_d$. By~\eqref{eq:1}, its volume is at least that guaranteed
by equation~\eqref{eq:lower}, as required.
The total running time is $O(n+d \log{d})$, as claimed. 
\end{proof}

\paragraph{Remark.}
Slightly improved parameters can be chosen according to the theory of
perfect vectors sets, \eg, by Theorem~\ref{thm:p(n)} in Section~\ref{sec:perfect},
however the effects in the outcome are negligible.

\section{Perfect vector sets and properly overlapping partitions}
\label{sec:perfect}

\paragraph{Perfect vector sets.}
Let $n \geq 2$ and $\Sigma=\{0,1\}$.
A set of binary vectors $\V=\{\mathbf{v_1},\ldots,\mathbf{v_k}\}$,
where $\mathbf{v_1},\ldots,\mathbf{v_k} \in \{0,1\}^n$ is called  \emph{perfect} if
(i)~$|\V| \geq 2$ and
(ii)~for every pair $(\mathbf{v_i},\mathbf{v_j})$, $1 \leq i<j \leq k$, 
and for every $\alpha \in \{0,1\}^2$, we have 
$\mathbf{v_i}[r] \, \mathbf{v_j}[r] =\alpha $, for some $r \in [n]$.
We refer to the latter condition as the \emph{covering condition} for
the pair $(\mathbf{v_i},\mathbf{v_j})$ and the binary string $\alpha$.
Since $|\Sigma^2|=4$, the covering condition requires $n \geq 4$.
For example, writing the elements in $\Sigma^2$ as the $4$ rows of a
$4 \times 2$ binary matrix yields a perfect set of $2$ binary vectors as
the columns of this matrix. This shows the existence of perfect vector
sets of length $4$;
and the existence of perfect vector sets of any higher length is implied.
A vector set that is not perfect is called \emph{imperfect}. 

\paragraph{Remarks.}
Observe that the covering condition above implies the seemingly stronger
covering condition: for every unordered pair $\{i,j\} \subset [k]$
and for every $\alpha \in \{0,1\}^2$, we have 
$\mathbf{v_i}[r] \, \mathbf{v_j}[r] =\alpha $, for some $r \in [n]$.

Further, observe that every perfect multiset is actually a set of vectors, \ie, no
duplicates may exist. Indeed, assume that two elements of the multiset
are the same vector:
$\mathbf{v_i}=\mathbf{v_j} =\mathbf{v}$ for some $i<j$;
then the required covering condition
fails for this ordered pair for both $\alpha =01$ and $\alpha =10$.
We have thus shown that the notion of perfect vector sets cannot be extended to 
multisets. 

\smallskip
Let $p(n)$ denote the maximum size of a perfect set of vectors of
length $n \geq 4$; by the above observations, $2 \leq p(n) \leq 2^n$. 
In Theorem~\ref{thm:p(n)} we give a finer estimate of $p(n)$, in particular,
it is shown that $p(n)= {n-1 \choose \lfloor n/2 \rfloor -1} = \Theta(2^n \, n^{-1/2})$.

\paragraph{$t$-wise perfect vector sets.}
We extend the above setup for larger alphabets and for multiple vectors as follows.
Let $\Sigma_a=\{0,1,\ldots,a-1\}$, where $a \geq 2$; let $t \geq 2$.
A set of vectors $\V=\{\mathbf{v_1},\ldots,\mathbf{v_k}\}$,
where $\mathbf{v_1},\ldots,\mathbf{v_k} \in \Sigma_a^n$ is called
$t$-wise \emph{perfect} with respect to $\Sigma_a$ if
(i)~$|\V| \geq t$ and
(ii)~for for every $t$-uple $(\mathbf{v_{i_1}},\ldots,\mathbf{v_{i_t}})$, 
where $1 \leq i_1 < i_2 < \ldots < i_t \leq k$,
and for every $\alpha \in  \Sigma_a^t$, we have 
$\mathbf{v_{i_1}}[r] \ldots \mathbf{v_{i_t}}[r] =\alpha $, for some $r \in [n]$.
We refer to the latter condition as the $t$-wise \emph{covering condition} for
the $t$-uple $(\mathbf{v_{i_1}} \ldots \mathbf{v_{i_t}})$ and the string $\alpha$,
where $|\alpha|=t$.
If there exists a $t$-wise perfect set of vectors of length $n$ over the
alphabet $\Sigma_a$, then we must have $n \ge a^t$.
As in the binary case,  writing the elements in $\Sigma_a^t$ as the $a^t$ rows of a
$a^t \times t$ matrix yields a $t$-wise perfect set of $t$ vectors over $\Sigma_a$
as the columns of this matrix. This shows the existence of perfect
vector sets of length $a^t$;
and the existence of $t$-wise perfect vector sets of any higher length is implied.
A vector set that is not $t$-wise perfect is called $t$-wise \emph{imperfect}. 
Throughout this paper we assume that $a$ and $t$ are fixed and $n$ tends to infinity. 

\paragraph{Remarks.}
Clearly, if $s \leq t$, a vector set that is $t$-wise \emph{perfect}
with respect to $\Sigma_a$
is also $s$-wise \emph{perfect} with respect to $\Sigma_a$. 
Again, the covering condition above implies the seemingly stronger
covering condition that takes $t$ vector indexes in any order. 
Finally, every perfect multiset is in fact a set of vectors, \ie, no
duplicates may exist; that is, 
the notion of $t$-wise perfect vector sets cannot be extended to multisets. 

\smallskip
Let $p(a,t,n)$ denote the maximum size of a $t$-wise perfect set of
vectors of length $n \geq a^t$ 
over $\Sigma_a$. By the above observations, $t \leq p(a,t,n) \leq a^n$.
By slightly abusing notation, we write $p(n)$ instead of $p(2,2,n)$. 

\paragraph{Properly overlapping partitions.}
For any $a \ge 2$ and $t \ge 2$,
we say that a family $\P$ of (unordered) $a$-partitions of a set
\emph{$t$-wise properly overlap} if
(i)~$|\P| \geq t$ and 
(ii)~for any subfamily of $t$ $a$-partitions $P_1, \ldots, P_t$ in $\P$,
the intersection of any $t$ parts, with one part from each $P_i$, is nonempty.
Observation~\ref{obs:equiv} below shows that $p(a, t, n)$,
from the earlier setup with perfect vector sets,
can be defined alternatively as the maximum size of a family of $t$-wise properly
overlapping $a$-partitions of $[n]$. We thus must have $n \geq a^t$.
\begin{observation} \label{obs:equiv}
  Any family of $t$-wise perfect set of vectors of length $n$
  over the alphabet $\Sigma_a$
  can be put into a one-to-one correspondence with a same-size family of $t$-wise
  properly overlapping $a$-partitions of $[n]$. 
  Conversely, any family of $t$-wise properly overlapping $a$-partitions of $[n]$
  can be put into a one-to-one correspondence with a same-size family of
  $t$-wise perfect set of vectors of length $n$ over the alphabet $\Sigma_a$.
\end{observation}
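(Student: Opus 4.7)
The plan is to exhibit a natural map $\Phi$ sending a vector $\mathbf{v} \in \Sigma_a^n$ to the unordered partition of $[n]$ whose blocks are the (nonempty) level sets $L_s(\mathbf{v}) = \{r \in [n] : \mathbf{v}[r] = s\}$, $s \in \Sigma_a$, and to show that $\Phi$ restricts to a size-preserving bijection between $t$-wise perfect vector families of length $n$ over $\Sigma_a$ and families of $t$-wise properly overlapping $a$-partitions of $[n]$.

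First I would observe that on a $t$-wise perfect family, every vector $\mathbf{v}$ is surjective onto $\Sigma_a$: if some symbol $s \in \Sigma_a$ were missing from $\mathbf{v}$, then picking any other $t-1$ vectors and any string $\alpha \in \Sigma_a^t$ with $\alpha_1 = s$ violates the covering condition at the first coordinate. Hence $\Phi(\mathbf{v})$ has exactly $a$ nonempty blocks, so is a genuine $a$-partition. Next I would verify the ``properly overlapping'' side of the correspondence: for vectors $\mathbf{v}_{i_1}, \ldots, \mathbf{v}_{i_t}$ and a choice of blocks $L_{\alpha_j}(\mathbf{v}_{i_j})$, one from each $\Phi(\mathbf{v}_{i_j})$, the intersection $\bigcap_{j=1}^t L_{\alpha_j}(\mathbf{v}_{i_j})$ is exactly the set of indices $r$ with $\mathbf{v}_{i_j}[r] = \alpha_j$ for every $j$, which is nonempty by the covering condition applied to $\alpha = (\alpha_1, \ldots, \alpha_t)$; the implication runs in both directions when reading the blocks as level sets, so proper overlap is equivalent to covering.

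The delicate step is injectivity of $\Phi$ on perfect vector sets, since in principle two distinct vectors could induce the same \emph{unordered} partition by differing via a nontrivial permutation of symbols. I would rule this out as follows. Suppose $\mathbf{v}_i \neq \mathbf{v}_j$ but $\Phi(\mathbf{v}_i) = \Phi(\mathbf{v}_j)$; then there exists a permutation $\sigma$ of $\Sigma_a$, not the identity, with $\mathbf{v}_j[r] = \sigma(\mathbf{v}_i[r])$ for all $r$. Choose $s \in \Sigma_a$ with $\sigma(s) \neq s$. Then no coordinate $r$ satisfies $\mathbf{v}_i[r] = \mathbf{v}_j[r] = s$, so the pair $(\mathbf{v}_i, \mathbf{v}_j)$ fails to cover the string $(s, s, \ldots, s)$ (or, using $t$-wise perfectness implies $2$-wise perfectness as noted in the remarks, the pair already fails to cover the length-$2$ string $ss$), contradicting perfectness. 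Hence distinct perfect vectors yield distinct partitions, and $|\Phi(\V)| = |\V|$.

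For the converse direction, given a family $\P = \{P_1, \ldots, P_k\}$ of $t$-wise properly overlapping $a$-partitions, I would arbitrarily label the blocks of each $P_i$ by the symbols of $\Sigma_a$ and define $\mathbf{v}_i[r]$ to be the label of the block of $P_i$ containing $r$; the resulting vectors are distinct because the multiset of level sets recovers the underlying unordered partition, and the covering condition is the same statement as proper overlap after this labeling. The main obstacle, as above, is the bookkeeping of ordered versus unordered partitions; once the symbol-permutation argument rules out collapsing distinct perfect vectors to a common unordered partition, the rest is a direct translation between the two languages.
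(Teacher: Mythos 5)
Your proof is correct and takes essentially the same route as the paper: the level-set correspondence between vectors over $\Sigma_a$ and $a$-partitions of $[n]$, under which the $t$-wise covering condition translates verbatim into the properly overlapping condition. You are in fact more careful than the paper's terse ``one can see the correspondence is one-to-one'': your observations that every vector in a perfect family is surjective onto $\Sigma_a$ (so each level-set partition genuinely has $a$ parts) and that distinct perfect vectors cannot collapse to the same \emph{unordered} partition (via the symbol-permutation argument) are exactly the details the paper leaves implicit, and both are argued correctly.
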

\begin{proof}
 Let $\V$ denote a family of $t$-wise perfect set of vectors of length $n$ over the
 alphabet $\Sigma_a$. Construct a  family of partitions of $[n]$ as follows:
 For any vector $\mathbf{v} \in \V$, consider the $a$-partition of $[n]$ in which
 element $r$ belongs to the set $\mathbf{v}[r]$, $r=1,2,\ldots,n$.
 One can see that the above correspondence is one-to-one.
 
  Suppose now that $\P$ is a family of $t$-wise properly overlapping
  $a$-partitions of $[n]$. 
  For any $a$-partition of $[n]$ consider the vector whose
  $r$th position is the number of the set containing $r$ (an element of $[a]$). 
  One can see that the above correspondence is one-to-one.

 Second, the $t$-wise perfect condition with respect to $\V$ is the same as the
 $t$-wise properly overlapping condition with respect to $\P$: indeed,
 the $t$-wise covering condition
 for the $t$-uple $(\mathbf{v_{i_1}} \ldots \mathbf{v_{i_t}})$ 
 and the string $\alpha$ is nothing else than the properly overlapping condition 
 for the corresponding $t$ $a$-partitions $P_{i_1},\ldots,P_{i_t}$, \ie, 
 the intersection of any $t$ parts, with one part from each $P_i$, is nonempty. 
\end{proof}

Note, if $s \leq t$, then any family of \emph{$t$-wise properly overlapping}
$a$-partitions of $[n]$ are also $s$-wise properly overlapping,
thus if $n \geq a^t$, then 
$p(a, t, n) \le p(a, s, n)$; in particular $p(a, t, n) \le p(a, 2, n)$.
Asymptotics of $p(a,2,n)$ for some small values of $a$, as implied by 
Theorems~\ref{thm:p(a,t,n)} and~\ref{thm:p(a,2,n)} are displayed in Table~\ref{table},
together with the exact value of $p(2,2,n)$ from Theorem~\ref{thm:p(n)}. 
The exact statements and the proofs are to follow.
\begin{table}[hbtp]
\begin{center}
\begin{tabular}{||c||c|c|c|c||} \hline
$a$ & 2 & 3 & 4 & 10 \\ \hline
  lower bd. on $p(a,2,n)$ & ${n-1 \choose \lfloor n/2 \rfloor -1}$ 
  & $\Omega(1.25^n)$ & $\Omega(1.12^n)$ & $\Omega(1.01^n)$ \\ \hline
  upper bd. on $p(a,2,n)$  & ${n-1 \choose \lfloor n/2 \rfloor -1}$
  & $O(1.89^n)$ & $O(1.76^n)$ &  $O(1.39^n)$ \\ \hline
\end{tabular}
\end{center}
\caption{$p(a,2,n)$ for a few small $a$.}
\label{table}
\end{table}

\section{An exact formula for $p(n) = p(2,2,n)$} \label{sec:p(n)}

In this section we prove the following exact formula:
\begin{theorem}\label{thm:p(n)}
For any $n \ge 4$, we have
\begin{equation} \label{eq:2}
  p(n) = {n-1 \choose \lfloor n/2 \rfloor -1}.
\end{equation}
\end{theorem}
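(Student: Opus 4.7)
The plan is to reduce $p(n)$, via Observation~\ref{obs:equiv}, to the classical extremal problem of determining the largest \emph{intersecting antichain} in a Boolean lattice, and then handle the two inequalities by a direct construction and by Milner's theorem, respectively. For the lower bound, I would exhibit the family consisting of all $2$-partitions $\{A,\overline{A}\}$ of $[n]$ in which the part containing the element $1$ has size exactly $\lfloor n/2 \rfloor$; there are precisely $\binom{n-1}{\lfloor n/2 \rfloor - 1}$ such partitions. For any two distinct representatives $A, B$ of this form, the element $1$ lies in $A \cap B$; since $|A| = |B|$ and $A \ne B$, neither of $A, B$ contains the other, so both $A \cap \overline{B}$ and $\overline{A} \cap B$ are nonempty; and $|A \cup B| \le |A| + |B| = 2 \lfloor n/2 \rfloor \le n$, with $|A \cup B| = n$ possible only when $n$ is even and $A, B$ are disjoint, which $1 \in A \cap B$ rules out. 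All four intersections being nonempty is exactly the proper overlap condition, so $p(n) \ge \binom{n-1}{\lfloor n/2 \rfloor - 1}$.

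For the upper bound, let $\V$ be a perfect vector set of size $k$, and pass to $k$ pairwise properly overlapping $2$-partitions $\{A_i, \overline{A_i}\}$ via Observation~\ref{obs:equiv}. In each partition choose the unique representative $A_i \ni 1$ (neither $\emptyset$ nor $[n]$ can occur as a part, since either fails proper overlap with every other partition). Proper overlap then forces $\{A_1, \ldots, A_k\}$ to be an antichain in $2^{[n]}$ satisfying $A_i \cup A_j \ne [n]$ for all $i \ne j$. Viewing $C_i := \overline{A_i}$ as a subset of the $(n-1)$-element ground set $\{2, \ldots, n\}$, the family $\{C_1, \ldots, C_k\}$ is still an antichain (complementation reverses inclusions) and is \emph{intersecting}, because $A_i \cup A_j \ne [n]$ becomes $C_i \cap C_j \ne \emptyset$. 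Milner's theorem on intersecting antichains then yields $k \le \binom{n-1}{\lceil n/2 \rceil}$, which equals $\binom{n-1}{\lfloor n/2 \rfloor - 1}$ by the symmetry $\binom{n-1}{j} = \binom{n-1}{n-1-j}$.

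The main obstacle is achieving the sharp upper bound. The naive strategy of treating $\{A_i\} \cup \{\overline{A_i}\}$ as a $2k$-element antichain in $2^{[n]}$ and invoking Sperner only yields $k \le \binom{n}{\lfloor n/2 \rfloor}/2$; this is tight for even $n$ but strictly weaker than $\binom{n-1}{\lfloor n/2 \rfloor - 1}$ for odd $n$. The passage to an intersecting antichain on an $(n-1)$-element ground set is what closes this gap, since Milner's bound exploits the intersecting condition through a shifting-plus-LYM argument that the plain Sperner inequality is blind to. If a self-contained presentation is desired, Milner's argument can be transcribed directly in the partition language: shift any representative $A_i$ of size at most $\lfloor n/2 \rfloor$ up to a larger one while preserving the antichain and non-covering conditions, so that only representatives of size exactly $\lfloor n/2 \rfloor$ remain, after which the constraint $1 \in A_i$ together with a single-level count gives the bound.
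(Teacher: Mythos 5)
Your proposal is correct. The lower bound is the same construction as the paper's (all $2$-partitions whose part through the element $1$ has size $\lfloor n/2\rfloor$), with essentially the same verification of the four nonempty intersections. The upper bound, however, takes a genuinely different route. The paper normalizes each partition by its \emph{smaller} part $A_i$ (breaking ties by membership of $1$), checks that the resulting family satisfies $A_i\cap A_j\neq\emptyset$, $A_i\not\subseteq A_j$, $A_i\cup A_j\neq[n]$, and then cites the known extremal bound for such families (Problem~6C in van Lint--Wilson, i.e.\ Bollob\'as's 1973 theorem on Sperner systems of complementary pairs). You instead normalize by the part \emph{containing} $1$, so that the complements $C_i=\overline{A_i}$ live on the $(n-1)$-element ground set $\{2,\ldots,n\}$, observe that $\{C_i\}$ is an intersecting antichain there, and invoke Milner's theorem to get $k\le{n-1\choose\lceil n/2\rceil}={n-1\choose\lfloor n/2\rfloor-1}$. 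Both arguments outsource the hard step to a classical result, but they outsource to different ones: your choice of representative manufactures a common element, which lets you trade the three-condition theorem for the more standard Milner bound at the cost of dropping to a ground set of size $n-1$; the paper's cited result needs no common element but is the less textbook-standard of the two. Your side remarks are also accurate — plain Sperner applied to $\{A_i\}\cup\{\overline{A_i}\}$ gives only ${n\choose\lfloor n/2\rfloor}/2$, which is off by a factor for odd $n$ — and the observation that empty parts cannot occur (needed to make the choice of representative well defined) is correctly justified. The only caveat is that, as written, the upper bound rests on Milner's theorem as a black box; if you want the proof self-contained you would need to include its proof, just as the paper would need to include the proof of the van Lint--Wilson exercise.
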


\paragraph{Lower bound.}
Consider the family $\P$ consisting of all $2$-partitions of the form $A_i \cup B_i$,
where $1 \in A_i$, and $|A_i|=\lfloor n/2 \rfloor$.
We clearly have $|\P|= {n-1 \choose \lfloor n/2 \rfloor -1}$. So it only
remains to show that the $2$-partitions in $\P$ are properly overlapping. 
Let $i<j$. Since $1 \in A_i$ and $1 \in A_j$
it follows that $A_i \cap A_j \neq \emptyset$.
The same premise also implies that $B_i \cup B_j \subseteq \{2,3,\ldots,n\}$; since
$|B_i| = |B_j| =\lceil n/2 \rceil$, it follows that $B_i \cap B_j \neq \emptyset$.
We now show that $A_i \cap B_j \neq \emptyset$; assume for contradiction that
$A_i \cap B_j = \emptyset$;
since $|A_i|=\lfloor n/2 \rfloor$ and $|B_j| =\lceil n/2 \rceil$,
we have $B_j =\overline{A_i}$; however, $B_i =\overline{A_i}$; and so
$B_i=B_j$ and $A_i=A_j$; that is, $A_i \cup B_i = A_j \cup B_j$ is the same
$2$-partition, which is a contradiction.
We have shown that $A_i \cap B_j \neq \emptyset$; a symmetric argument shows that
$A_j \cap B_i \neq \emptyset$, hence the $2$-partitions in $\P$
are properly overlapping, as required. 

\paragraph{Upper bound.}
Consider a family $\P$ of properly overlapping $2$-partitions; write $|\P|=m$.
Each $2$-partition is of the form $A_i \cup B_i$, where (i)~$|A_i| \leq |B_i|$, and
(ii)~if $|A_i| = |B_i|$, then $1 \in A_i$.
Consider the family of sets $\A =\{A_1,\ldots,A_m\}$.
Since $\P$ consists of properly overlapping $2$-partitions,
$A_i \cap A_j \neq \emptyset$ for every $i \neq j$.

We next show that $A_i \not \subseteq A_j$,
for every $i \neq j$; that is, $\A$ is an \emph{antichain}.
In particular, this will imply that $\A$ consists of pairwise distinct
sets, \ie,  $A_i \neq A_j$ for every $i \neq j$. 
Assume for contradiction that $A_i \subseteq A_j$ for some $i \neq j$;
since $A_j \cap B_j =\emptyset$ we also have $A_i \cap B_j =\emptyset$,
contradicting the fact that the $2$-partitions in $\P$ are properly overlapping. 
We next show that $A_i \cup A_j \neq [n]$, for every $i \neq j$.
This holds if $1 \notin A_i$ and $1 \notin A_j$, since then $1 \notin A_i \cup A_j$.
It also holds if $1 \in A_i$ and $1 \in A_j$, since then $|A_i \cup A_j| \leq n-1$.
Assume now (for the remaining 3rd case) that $1 \in A_i$ and $1 \notin A_j$:
since $1 \notin A_j$, it follows that $A_j < n/2$,
and consequently, $|A_i \cup A_j| \leq n-1$.

To summarize, we have shown that $\A=\{A_1,\ldots,A_m\}$ consists of $m$ distinct sets
such that, if $i,j \in [m]$, $i \neq j$, then 
$$ A_i \cap A_j \neq \emptyset, \ \ \ \ A_i \not\subseteq A_j,
\ \ \ A_i \cup A_j \neq [n]. $$
It is known~\cite[Problem~6C, p.~46]{LW01} that under these conditions
$$ |\A| \leq {n-1 \choose \lfloor n/2 \rfloor -1}. $$
Since $|\A|=|\P|$, the same bound holds for $|\P|$ and this concludes the proof
of the upper bound on $p(n)$, and thereby the proof of Theorem~\ref{thm:p(n)}. 

\paragraph{Examples.}
By Theorem~\ref{thm:p(n)}, $p(4)=3$. 
$\V$ and $\P$ below correspond to each other and make a tight example:
$$ \V = \left\{
\begin{pmatrix}0\\0\\1\\1\end{pmatrix}, 
     \begin{pmatrix}0\\1\\0\\1\end{pmatrix}, 
       \begin{pmatrix}0\\1\\1\\0\end{pmatrix}
         \right\}. ~~~
\P= \big\{
         \{\{1,2\}, \{3,4\}\}, 
         \{\{1,3\}, \{2,4\}\}, 
         \{\{1,4\}, \{2,3\}\}
      \big\}. 
$$ 

By Theorem~\ref{thm:p(n)}, $p(5)=4$. 
 $\V$ and $\P$ below correspond to each other and make a tight example:
$$ \V = \left\{
\begin{pmatrix}0\\0\\1\\1\\1\end{pmatrix}, 
     \begin{pmatrix}0\\1\\0\\1\\1\end{pmatrix}, 
       \begin{pmatrix}0\\1\\1\\0\\1\end{pmatrix},
       \begin{pmatrix}0\\1\\1\\1\\0\end{pmatrix}
         \right\}. ~~~
$$
$$         \P= \big\{
         \{\{1,2\}, \{3,4,5\}\}, 
         \{\{1,3\}, \{2,4,5\}\}, 
         \{\{1,4\}, \{2,3,5\}\},
         \{\{1,5\}, \{2,3,4\}\}   
      \big\}. 
$$

\section{General bounds on $p(a,t,n)$} \label{sec:p(a,t,n)}

In this section we prove the following theorem:

\begin{theorem}\label{thm:p(a,t,n)}
  Let $a \geq 3$ and $t \geq 2$ be fixed. Then there exist constants
  $c_1=c_1(a,t)>0$,
  $\lambda_1=\lambda_1(a,t)>1$
$c_2=c_2(a)>0$,
  $\lambda_2=\lambda_2(a)<2$, 
and $n_0 (a,t) \geq a^t$ such that 
\begin{equation} \label{eq:3}
  p(a,t,n) \geq c_1 \lambda_1^n \text{ and }
  p(a,2,n) \leq c_2 \lambda_2^n,
\end{equation}
 for $n \geq n_0(a,t)$.
 In particular,
\begin{equation} \label{eq:4}
  p(a,t,n) \leq p(a,2,n) \leq {n-1 \choose \lfloor n/a \rfloor -1}.
\end{equation}
 \end{theorem}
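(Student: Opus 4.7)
The plan is to prove the lower and upper bounds separately. The chain $p(a,t,n) \leq p(a,2,n)$ is immediate from the earlier observation that every $t$-wise perfect family is also $2$-wise perfect, so it is enough to establish (i) the lower bound $p(a,t,n) \geq c_1 \lambda_1^n$ and (ii) the upper bound $p(a,2,n) \leq \binom{n-1}{\lfloor n/a \rfloor - 1}$, from which the asymptotic $c_2 \lambda_2^n$ with $\lambda_2 < 2$ will follow by Stirling.

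For (i), I would use the probabilistic method with deletion. Draw $k$ vectors independently and uniformly at random from $\Sigma_a^n$. For any fixed $t$-subset of indices $i_1 < \cdots < i_t$ and any target $\alpha \in \Sigma_a^t$, the probability that no coordinate realizes $\alpha$ on that $t$-tuple is $(1 - a^{-t})^n$; a union bound over the $\binom{k}{t}$ subsets and the $a^t$ targets bounds the expected number of $t$-subsets violating the covering condition by $\binom{k}{t}\, a^t\, (1 - a^{-t})^n$. Setting $\lambda_1 := (1 - a^{-t})^{-1/(t-1)} > 1$ and $k := c\, \lambda_1^n$ for a sufficiently small $c > 0$ drives this expectation below $k/2$; some realization then has at most $k/2$ bad $t$-subsets, and deleting one vector from each leaves a $t$-wise perfect family of size $\Omega(\lambda_1^n)$.

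For (ii), I would follow the pattern of the proof of Theorem~\ref{thm:p(n)}. Given a properly overlapping family $\P = \{P_1, \ldots, P_m\}$ of $a$-partitions, assign to each $P_i = (A_i^1, \ldots, A_i^a)$ its smallest part $A_i := A_i^1$ (with a fixed tie-breaking rule), so $|A_i| \leq \lfloor n/a \rfloor$. A direct verification shows that $\A := \{A_1, \ldots, A_m\}$ consists of $m$ distinct sets, is an antichain, is pairwise intersecting, and satisfies $A_i \cup A_j \neq [n]$ (the last being automatic for $a \geq 3$ since $|A_i| + |A_j| \leq 2n/a < n$). A crude upper bound follows from the Bollob\'as set-pair inequality applied to the pairs $(A_i,\, [n] \setminus A_i)$: proper overlap gives $A_i \cap A_j^k \neq \emptyset$ for every $k \geq 2$ and $i \neq j$, hence $A_i \cap ([n] \setminus A_j) \neq \emptyset$, yielding $\sum_{i=1}^{m} \binom{n}{|A_i|}^{-1} \leq 1$ and therefore $m \leq \binom{n}{\lfloor n/a \rfloor}$. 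By Stirling the latter equals $\Theta\bigl(n^{-1/2}\, 2^{H(1/a)\, n}\bigr)$, where $H$ denotes the binary entropy, so setting $\lambda_2 := 2^{H(1/a)} < 2$ (valid since $a \geq 3$) already delivers the exponential bound $p(a,2,n) \leq c_2 \lambda_2^n$.

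The main obstacle is sharpening $\binom{n}{\lfloor n/a \rfloor}$ to $\binom{n-1}{\lfloor n/a \rfloor - 1}$, which requires invoking the antichain condition in addition to the intersecting one. In the uniform-size case $|A_i| = \lfloor n/a \rfloor$ this is precisely the Erd\H{o}s--Ko--Rado theorem (applicable since $\lfloor n/a \rfloor \leq n/2$); the varying-size case can be reduced to the uniform one by a standard shifting/compression argument, or handled via a weighted LYM-type refinement that simultaneously uses the antichain, intersecting, and non-covering conditions on $\A$. Even without this sharpening, the crude Bollob\'as bound suffices for the asymptotic claim with $\lambda_2 < 2$.
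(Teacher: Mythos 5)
Your proposal follows essentially the same two-pronged route as the paper: a random construction for the lower bound and a ``smallest part'' antichain reduction for the upper bound. On the lower bound, the paper uses a plain union bound (choose $k$ as large as possible so that the probability of \emph{any} uncovered $t$-tuple is below $1$), which yields $\lambda_1 = a/(a^t-1)^{1/t} = (1-a^{-t})^{-1/t}$. Your deletion variant is also correct --- every bad $t$-subset loses a vector, so the surviving family is $t$-wise perfect, and duplicate vectors are automatically eliminated since any $t$-tuple containing two equal vectors is bad --- and it even gives the slightly larger base $(1-a^{-t})^{-1/(t-1)}$. Either version suffices for~\eqref{eq:3}. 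Your upper-bound reduction (distinctness, antichain, intersecting, $|A_i|\le\lfloor n/a\rfloor$) matches the paper's, and your set-pair/LYM step with the pairs $(A_i,[n]\setminus A_i)$ is valid: the hypothesis $A_i\cap([n]\setminus A_j)\neq\emptyset$ is exactly the antichain condition, so $\sum_i 1/\binom{n}{|A_i|}\le 1$ and $m\le\binom{n}{\lfloor n/a\rfloor}$, which delivers $\lambda_2=a/(a-1)^{(a-1)/a}<2$ and completes~\eqref{eq:3}.

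The one genuine incompleteness is the second inequality of~\eqref{eq:4}: the theorem asserts the sharper bound $\binom{n-1}{\lfloor n/a\rfloor-1}$, smaller than your $\binom{n}{\lfloor n/a\rfloor}$ by a factor of roughly $a$, and you only sketch how to obtain it (EKR plus shifting, or a weighted LYM refinement), correctly flagging this as the obstacle. The paper closes this step by quoting the known theorem (van Lint and Wilson, Theorem~6.5) that an intersecting antichain of subsets of $[n]$, each of size at most $k\le n/2$, has at most $\binom{n-1}{k-1}$ members; its standard proof is a Katona-style cyclic permutation count, i.e., precisely the ``weighted LYM-type refinement'' you allude to (a generalization of that count appears as Lemma~\ref{lem:pack} later in the paper, and is used in Theorem~\ref{thm:p(a,2,n)} to get an even better constant). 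So as written your argument proves~\eqref{eq:3} and the first inequality of~\eqref{eq:4} in full, but the exact binomial coefficient in~\eqref{eq:4} still requires importing or proving that intersecting-antichain theorem; your proposed routes to it are sound but not carried out.
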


\paragraph{Lower bound.}
To prove the lower bound on $p(a,t,n)$ in~\eqref{eq:3}
we construct a perfect set of vectors via a simple random construction. 
We randomly choose a set $\V=\{\mathbf{v_1},\ldots,\mathbf{v_k}\}$,
of $k \geq t$ vectors,
where each coordinate of each vector is chosen uniformly at random
from $\Sigma_a=\{0,1,\ldots,a-1\}$, for a suitable $k$.
We then show that for the chosen $k$, the set of vectors satisfies the required
covering condition for each $t$-uple of vectors with positive probability.

For any $\alpha \in \Sigma_a^t$,
$1 \leq i_1 < i_2 < \ldots < i_t \leq k$, 
and $r \in [n]$, we have
 $$ \Prob(\mathbf{v_{i_1}}[r] \ldots \mathbf{v_{i_t}}[r] \neq \alpha)=1- a^{-t}. $$
 Let $E(\alpha,i_1,\ldots,i_k)$ be the bad event that
 $\mathbf{v_{i_1}}[r] \ldots \mathbf{v_{i_t}}[r] \neq \alpha$ for
 each $r \in [n]$.

 Clearly,
 $$ \Prob(E(\alpha,i_1,\ldots,i_k)) \leq (1- a^{-t})^n. $$
 Let $F$ be the bad event that there exists $\alpha \in \Sigma_a^t$,
and a $t$-uple $1 \leq i_1 < i_2 < \ldots < i_t \leq k$, so that
$E(\alpha,i_1,\ldots,i_k)$ occurs.  
Clearly
 $$ \Prob(F) \leq a^t {k \choose t} (1- a^{-t})^n \leq (ak)^t (1- a^{-t})^n . $$
Set now $k \geq t$ as large as possible so that $ \Prob(F) < 1$, that is,
$$ k < \frac{1}{a} \left( \frac{a}{(a^t -1)^{1/t}} \right)^n,
\text{  for } n \geq n_0(a,t). $$
Since $ \Prob(F) <1$, by the basic probabilistic method (see, \eg, \cite{AS00}),
we conclude that the chosen set of vectors is $t$-wise perfect with
nonzero probability.
To satisfy the above inequality and thereby guarantee its existence,
we set (for a small $\eps>0$)
$$ c_1(a,t)=\frac{1}{a} -\eps, \text{ and }
\lambda_1(a,t) = \frac{a}{(a^t -1)^{1/t}} >1, $$
and thereby complete the proof of the lower bound.
Observe that for any fixed $t \geq 2$, the sequence
$$ x_m = \frac{m}{(m^t -1)^{1/t}}, \ \ \ m \geq 2, $$
is strictly decreasing, $x_2 \leq 2/\sqrt{3}$ and its limit is $1$. 

\paragraph{Upper bound.} 
To bound $p(a, 2, n)$ from above as in~\eqref{eq:3},
let $\P$ be a family of $a$-partitions of $[n]$
that pairwise properly overlap; write $|\P|=m$.
Each $a$-partition is of the form $A_i \cup B_i \cup \ldots $, for $i=1,\ldots,m$,
where $|A_i| \leq |B_i| \leq \ldots$. By this choice, 
$|A_i| \leq \lfloor n/a \rfloor $ for all $i \in [n]$. 
Consider the family of sets $\A =\{A_1,\ldots,A_m\}$.
Since $\P$ consists of properly overlapping $a$-partitions,
$A_i \cap A_j \neq \emptyset$ for every $i \neq j$.

We next show that $A_i \not \subseteq A_j$,
for every $i \neq j$; that is, $\A$ is an \emph{antichain}.
In particular, this will imply that $\A$ consists of pairwise distinct
sets, \ie,  $A_i \neq A_j$ for every $i \neq j$. 
Assume for contradiction that $A_i \subseteq A_j$ for some $i \neq j$;
since $A_j \cap B_j =\emptyset$ we also have $A_i \cap B_j =\emptyset$,
contradicting the fact that the $a$-partitions in $\P$ are properly overlapping. 

To summarize, we have shown that $\A=\{A_1,\ldots,A_m\}$ consists of $m$ distinct sets
such that, if $i,j \in [m]$, $i \neq j$, then 
$$ A_i \cap A_j \neq \emptyset, \ \ \ \ A_i \not\subseteq A_j, $$
and $|A_i| \leq \lfloor n/a \rfloor $ for all $i \in [n]$. 
It is known~\cite[Theorem~6.5, p.~46]{LW01} that under these conditions
$$ |\A| \leq {n-1 \choose \lfloor n/a \rfloor -1}. $$
Since $|\A|=|\P|$, the same bound holds for $|\P|$.

By Stirling's formula,
$$ n! = \sqrt{2\pi n} \left( \frac{n}{e}\right)^n
\left( 1 + O\left(\frac{1}{n} \right) \right), $$
hence
\begin{equation} \label{eq:stirling}
{n-1 \choose \lfloor n/a \rfloor -1} \leq {n \choose \lfloor n/a \rfloor }
\sim \frac{1}{\sqrt{n}} \left( \frac{a}{((a-1)^{a-1})^{1/a}} \right)^n.
\end{equation}

Note that the sequence
$$ y_m = \frac{m}{((m-1)^{m-1})^{1/m}}, \ \ \ m \geq 2, $$
is strictly decreasing, $y_2=2$, and its limit is $1$.
By~\eqref{eq:stirling} we can therefore set
$$ c_2(a)>0 \text{ and } \lambda_2(a) = \frac{a}{((a-1)^{a-1})^{1/a}} < 2, $$
and note that if $a$ is sufficiently large, then $ \lambda_2(a,2) $
is arbitrarily close to $1$, in agreement with the behavior of
$\lambda_1(a,t)$, for large $a$; that is, for any fixed $t \geq 2$, we have 
$\lim_{m \to \infty} x_m = \lim_{m \to \infty} y_m =1$.

\section{Sharper bounds on $p(a,t,n)$} \label{sec:sharper}

We next derive sharper bounds for $t=2$ (in Theorem~\ref{thm:p(a,2,n)})
via an explicit lower bound construction and via an upper bound argument
specific to this case.

\begin{theorem}\label{thm:p(a,2,n)}
Let $b = {a \choose 2}$ and $k = \lfloor n/(2b) \rfloor$.
  Then the following inequalities hold:
\begin{equation} \label{eq:5}
\frac12 {2k \choose k}
\leq p(a,2,n) \leq {n\choose \lceil n/a \rceil} \Big{/} (2b). 
\end{equation}
\end{theorem}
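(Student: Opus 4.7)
The plan for the lower bound is to exhibit an explicit family of $\frac{1}{2}\binom{2k}{k}$ pairwise properly overlapping $a$-partitions of $[n]$. First partition $[n]$ into $b=\binom{a}{2}$ blocks $B_{\{i,j\}}$ of size $2k$, one per unordered pair of part-indices, placing any leftover $n-2bk$ elements in a fixed part $A_1$ across all partitions. Each partition in the family is parameterized by a balanced binary code $c\in\{0,1\}^{2k}$ having exactly $k$ zeros: after identifying each $B_{\{i,j\}}$ with $[2k]$, assign position $r$ to $A_i$ if $c_r=0$ and to $A_j$ if $c_r=1$, where $i<j$. The family itself consists of the partitions indexed by one representative chosen from each complementary pair $\{c,\bar c\}$ of balanced codes, yielding exactly $\frac{1}{2}\binom{2k}{k}$ partitions.

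\paragraph{Verifying proper overlap.}
For two distinct codes $c,c'$ in the family and any pair of labels $(i,j)$, I would verify $A_i(c)\cap A_j(c')\neq\emptyset$ by a short case analysis. If $i\neq j$, the intersection lives entirely in block $B_{\{i,j\}}$ and equals the set of positions carrying the bit pattern appropriate to the directed pair; since both $c$ and $c'$ are balanced, $c\neq c'$ forces such a position to exist. If $i=j$, the intersection is a union over the blocks $B_{\{i,k\}}$ with $k\neq i$ of either ``common $0$-bit'' or ``common $1$-bit'' sets (depending on whether $k>i$ or $k<i$), plus the leftover part $L$ when $i=1$; splitting on $i\in\{1,a\}$ versus $1<i<a$ shows the only configuration preventing nonemptiness is $c'=\bar c$, which is precisely what is ruled out by picking one representative per complementary pair.

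\paragraph{Upper bound plan.}
Let $\mathcal{P}$ be an arbitrary family of $m$ pairwise properly overlapping $a$-partitions of $[n]$. From the proof of Theorem~\ref{thm:p(a,t,n)}, the collection $\mathcal{F}=\{X_i^P : P\in\mathcal{P},\,i\in[a]\}$ of all parts forms an antichain of size $am$, and parts from distinct partitions are themselves distinct: an equality $X_i^P=X_j^{P'}$ with $P\neq P'$ would imply $X_i^P\cap X_k^{P'}=\emptyset$ for every $k\neq j$, contradicting proper overlap. A direct application of LYM to $\mathcal{F}$ only yields $m\leq\binom{n}{\lceil n/a\rceil}/a$. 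To sharpen this by the missing factor of $a-1$, I plan to associate to each partition $P$ and each of the $a(a-1)=2b$ ordered pairs $(i,j)$ of distinct part-indices a canonical $\lceil n/a\rceil$-subset $Z_{P,i,j}\subseteq[n]$, constructed asymmetrically from $X_i^P$ and $X_j^P$ using a fixed linear ordering of $[n]$, and prove that the $2bm$ resulting subsets are pairwise distinct. This immediately gives $2bm\leq\binom{n}{\lceil n/a\rceil}$.

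\paragraph{Main obstacle.}
The hard part is defining the map $(P,i,j)\mapsto Z_{P,i,j}$ uniformly across partitions whose part-size profiles can vary widely---some parts as small as $a$, others close to $n-a+1$---while encoding the \emph{ordered} role of $j$ (not just the unordered pair $\{i,j\}$) together with the identity of $P$. The injectivity argument will need to combine three structural facts: parts across distinct partitions are distinct, proper overlap precludes containment between parts of distinct partitions, and within a partition parts are pairwise disjoint. Corner cases where some part is too small to accommodate the prescribed size $\lceil n/a\rceil$ without breaking the encoding of $j$ are the primary source of technical difficulty.
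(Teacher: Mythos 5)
Your lower bound is correct and is essentially the paper's construction: the same decomposition of $[n]$ into $b=\binom{a}{2}$ blocks of size $2k$ with the leftover dumped into $A_1$, one $a$-partition per balanced $2$-partition of the blocks, and the same verification that two distinct non-complementary $k$-subsets of a $2k$-set (and their complements) pairwise intersect. Your handling of the diagonal case $i=j$ is slightly more laborious than necessary but sound.

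The upper bound, however, has a genuine gap: you never define the map $(P,i,j)\mapsto Z_{P,i,j}$, and you explicitly flag its construction and injectivity as an unresolved ``main obstacle.'' As it stands you have only established $m\le\binom{n}{\lceil n/a\rceil}/a$, which is weaker than the claim by the needed factor of $a-1$, and it is far from clear that an injection of the $2bm$ triples into $\lceil n/a\rceil$-subsets exists at all, precisely because of the size-profile and corner-case issues you raise. The paper closes this gap by a different mechanism: instead of injecting into a single level of the Boolean lattice, it strengthens the LYM inequality itself. Its Lemma~\ref{lem:pack} (a circular-permutation argument adapted from van Lint--Wilson, Theorem~6.6) shows that if every two sets $A,A'$ in a family satisfy $|A\setminus A'|\ge b$ and $|A'\setminus A|\ge b$, then $\sum_A b/\binom{n}{|A|}\le 1$; the point is that two arcs realizing $A$ and $A'$ on a circular permutation must have left endpoints at least $b$ apart, so at most $\lfloor n/b\rfloor$ members of the family can appear consecutively in any one permutation. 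One then checks that any two parts in the entire family $\{X_i^P\}$ satisfy these separation conditions with $b=a-1$ (disjointness and $|X_i^P|\ge a$ within a partition; across partitions, $X_i^P$ must meet the other $a-1$ parts of $P'$, so $|X_i^P\setminus X_j^{P'}|\ge a-1$). Finally Lemma~\ref{lem:sum}, a Jensen/convexity bound on the Gamma-function extension of $1/\binom{n}{x}$, gives $\sum_{i=1}^a 1/\binom{n}{n_i}\ge a/\binom{n}{\lceil n/a\rceil}$ per partition, and combining yields $m\cdot a(a-1)\le\binom{n}{\lceil n/a\rceil}$, i.e., the stated $\binom{n}{\lceil n/a\rceil}/(2b)$. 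You would need either to actually construct your injection (and prove it total and injective despite the varying part sizes) or to adopt a weighted-LYM argument of this kind; without one of these, the upper half of the theorem is not proved.
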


\paragraph{An explicit lower bound.}
Let $b = {a \choose 2}$. Let $k = \lfloor n/(2b) \rfloor$. Then the
set $[n]$ can be partitioned into $b + 1$ subsets, including $b$ subsets
$B_{i j}$ of size $2k$, $1 \le i < j \le a$, and a possibly empty leftover
subset $C$.

Note that each $B_{ij}$ has size $2k$ and hence exactly $\frac12 {2k \choose k}$
$2$-partitions into two subsets of equal size $k$. We next construct a family
of $\frac12 {2k \choose k}$ pairwise properly overlapping $a$-partitions of $[n]$.

To obtain an $a$-partition $(P_1,\ldots,P_a)$, initialize each $P_i$ to an
empty set, then take a distinct $2$-partition of each $B_{ij}$ and put the
elements of the two parts into $P_i$ and $P_j$, respectively. Then each $P_i$
has size $k(a-1)$. Finally, if the leftover subset $C$ is not empty, add its
elements to $P_1$.

For any two $a$-partitions $(P_1,\ldots,P_a)$ and $(Q_1,\ldots,Q_a)$ thus
constructed, and for any pair $i<j$, 
the intersection of any one of $P_i,P_j$ and any one of $Q_i,Q_j$
is not empty because in each case, the two sets contain two distinct 
non-complementary $k$-subsets of the same $2k$-set $B_{ij}$.
Hence these $a$-partitions are pairwise properly overlapping as desired.

Finally, note that the size of this family is $\frac12 {2k \choose k}$, which is
about $2^{n/b}$, ignoring polynomial factors. When $a = 3$,
$b = {3 \choose 2} = 3$, we have a lower bound 
$p(3,2,n)= \Omega^*\big( (2^{1/3})^n \big) = \Omega(1.25^n)$.

We illustrate the construction for $a=3$, $n=12$; we get $k=2$, $|B_{ij}|=4$,
for $1 \leq i<j \leq 3$; and
$B_{12}=\{1,2,3,4\}$, $B_{13}=\{5,6,7,8\}$,  $B_{23}=\{9,10,11,12\}$.
Each $B_{ij}$ has three $2$-partitions; denote by $\P_{ij}$ the corresponding family.
\begin{align*}
\P_{12} &=\big\{ 
         \{\{1,2\}, \{3,4\}\}, 
         \{\{1,3\}, \{2,4\}\}, 
         \{\{1,4\}, \{2,3\}\}
      \big\}, \\ 
\P_{13} &=\big\{ 
         \{\{5,6\}, \{7,8\}\}, 
         \{\{5,7\}, \{6,8\}\}, 
         \{\{5,8\}, \{6,7\}\}
      \big\}, \\
\P_{23} &=\big\{ 
         \{\{9,10\}, \{11,12\}\}, 
         \{\{9,11\}, \{10,12\}\}, 
         \{\{9,12\}, \{10,11\}\}
      \big\}. 
\end{align*}
The resulting three $3$-partitions are:
\begin{align*}
\P&=\big\{ \{1,2,5,6\}, \{3,4,9,10\}, \{7,8,11,12\} \big\}, \\
\Q&=\big\{ \{1,3,5,7\}, \{2,4,9,11\}, \{6,8,10,12\} \big\}, \\
\R&=\big\{ \{1,4,5,8\}, \{2,3,9,12\}, \{6,7,10,11\} \big\}.
\end{align*}

\bigskip
For the upper bound we need the following two technical lemmas.

\begin{lemma}\label{lem:sum}
Let $a \ge 2$,
$n_i \ge 1$ for $1 \le i \le a$, and $n = \sum_{i=1}^a n_i$.
Then
$$
\sum_{i=1}^a \frac{1}{{n\choose n_i}}
\ge \frac{a}{{n\choose \lceil n/a \rceil}}.
$$
\end{lemma}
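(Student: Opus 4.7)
The plan is to exploit the fact that $f(k) := 1/\binom{n}{k}$ is a convex function of $k$ on $\{0,1,\ldots,n\}$ that is symmetric about $n/2$. Under the linear constraint $\sum n_i = n$, a smoothing argument will then show that $\sum_{i=1}^a f(n_i)$ is minimized at the most balanced partition; since $a\ge 2$ forces the balanced values into the range $[0,n/2]$ where $f$ is non-increasing, that minimum will be at least $a/\binom{n}{\lceil n/a\rceil}$.

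First I would verify the discrete convexity
\[
 f(k-1) + f(k+1) \;\ge\; 2\,f(k), \qquad 1 \le k \le n-1,
\]
as a consequence of the log-concavity of binomial coefficients: the inequality $\binom{n}{k}^{2} \ge \binom{n}{k-1}\binom{n}{k+1}$ gives $f(k)^{2} \le f(k-1)\,f(k+1)$, and AM--GM then yields $f(k-1)+f(k+1)\ge 2\sqrt{f(k-1)f(k+1)}\ge 2 f(k)$. Equivalently, the forward differences $\Delta f(k) := f(k+1)-f(k)$ are non-decreasing in~$k$.

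Next I would apply a standard smoothing step. Whenever some pair satisfies $n_i + 1 < n_j$, replacing $(n_i,n_j)$ by $(n_i+1, n_j-1)$ preserves the sum and changes $\sum f(n_\ell)$ by
\[
 \bigl[f(n_i+1)-f(n_i)\bigr] - \bigl[f(n_j)-f(n_j-1)\bigr] \;=\; \Delta f(n_i) - \Delta f(n_j-1) \;\le\; 0,
\]
since $n_i \le n_j-2$ and $\Delta f$ is non-decreasing. Iterating drives the tuple to one whose entries all lie in $\{\lfloor n/a\rfloor, \lceil n/a\rceil\}$ without ever increasing the sum, so
\[
 \sum_{i=1}^{a} f(n_i) \;\ge\; r\,f(\lceil n/a\rceil) + (a-r)\,f(\lfloor n/a\rfloor),
\]
where $r = n - a\lfloor n/a\rfloor \in \{0,1,\ldots,a-1\}$ is the number of ``large'' parts.

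Finally, since $a \ge 2$ we have $\lfloor n/a\rfloor \le \lceil n/a\rceil \le n/2$, and on $[0,n/2]$ the binomial $\binom{n}{k}$ is non-decreasing in $k$, so $f$ is non-increasing there and $f(\lfloor n/a\rfloor) \ge f(\lceil n/a\rceil)$. Hence the right-hand side above is at least $a\,f(\lceil n/a\rceil) = a/\binom{n}{\lceil n/a\rceil}$, as required. The only real subtlety is setting up the smoothing step cleanly; once the convex-difference inequality for $f$ is in hand, the rest is just monotonicity of $\binom{n}{k}$ on the left half of its range.
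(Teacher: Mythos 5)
Your proof is correct, but it takes a genuinely different route from the paper's. The paper first uses an exchange step to reduce to the case $n_i \le \lfloor n/2 \rfloor$ for all $i$, then extends $1/\binom{n}{k}$ to the real function $f(x)=\Gamma(x+1)\Gamma(n-x+1)/\Gamma(n+1)$, invokes convexity and monotonicity of this continuous extension, and applies Jensen's inequality to get $\sum_i f(n_i) \ge a\,f(n/a) \ge a\,f(\lceil n/a\rceil)$. You instead stay entirely in the discrete world: you derive discrete convexity of $f(k)=1/\binom{n}{k}$ from the log-concavity $\binom{n}{k}^2 \ge \binom{n}{k-1}\binom{n}{k+1}$ via AM--GM, and then run a smoothing argument that balances the $n_i$ without increasing the sum, landing on the multiset $\{\lfloor n/a\rfloor,\lceil n/a\rceil\}$. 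Your version is more elementary and self-contained (no Gamma function, no continuous convexity claim to verify), at the cost of having to justify termination of the smoothing process (easy, e.g.\ via $\sum n_i^2$ strictly decreasing). One small imprecision: your claim $\lceil n/a\rceil \le n/2$ fails for $a=2$ and $n$ odd, where $\lceil n/2\rceil=(n+1)/2>n/2$. The conclusion you need, namely $f(\lfloor n/a\rfloor)\ge f(\lceil n/a\rceil)$, still holds because $\binom{n}{k}$ is non-decreasing for $0\le k\le\lceil n/2\rceil$ and $\binom{n}{\lfloor n/2\rfloor}=\binom{n}{\lceil n/2\rceil}$, so the argument survives; you should just state the monotonicity range as $[0,\lceil n/2\rceil]$ rather than $[0,n/2]$ (the paper sidesteps this by treating $a=2$ separately).
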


\begin{proof}
The lemma clearly holds for $a = 2$
since ${n\choose n_i}$ is maximized at $n_i = \lfloor n/2 \rfloor$
or $\lceil n/2 \rceil$.
Now let $a \ge 3$.
First observe that we can have $n_i > \lfloor n/2 \rfloor$ for at most one
$n_i$.
If $n_i > \lfloor n/2 \rfloor$ for some $n_i$,
then we must have $n_j < \lfloor n/2 \rfloor$ for some $n_j$.
But then
$1/{n\choose n_i} \ge 1/{n\choose n_i-1}$
and
$1/{n\choose n_j} \ge 1/{n\choose n_j+1}$,
where $n_i-1$ is less than $n_i$,
and $n_j+1$ remains at most $\lfloor n/2 \rfloor$.
Thus we can assume without loss of generality that
$n_i \le \lfloor n/2 \rfloor$ for all $n_i$.
Recall the extension of the factorial function $k!$ for integers $k$ to
the gamma function $\Gamma(x)$ for real numbers $x$,
where $\Gamma(k+1) = k!$.
Correspondingly, we can extend $1/{n \choose k}$
to a real function $f(x) = \Gamma(x+1)\Gamma(n-x+1)/\Gamma(n+1)$
such that $f(k) = 1/{n \choose k}$.
Since $f(x)$ is convex and decreasing for
$1 \le x \le \lfloor n/2 \rfloor$,
it follows by Jensen's inequality that
\begin{equation*}
\sum_{i=1}^a \frac{1}{{n\choose n_i}}
\ge a\cdot f(n/a)
\ge a\cdot f(\lceil n/a \rceil)
= \frac{a}{{n\choose \lceil n/a \rceil}}.
\tag*{\qedhere}
\end{equation*}
\end{proof}

\begin{lemma}\label{lem:pack}
Let $m \ge 2$, $n \ge 2$, and $b \ge 1$.
Let $\A = \{A_1,\ldots,A_m\}$ be a family of $m$ distinct subsets of $[n]$
such that
$|A_i \setminus A_j| \ge b$
and
$|A_j \setminus A_i| \ge b$
for any two subsets $A_i$ and $A_j$ in $\A$.
Then
$$
\sum_{i=1}^m \frac{b}{{n\choose |A_i|}}
\le 1.
$$
\end{lemma}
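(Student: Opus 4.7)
The plan is to reinterpret each summand as a probability under a uniform random permutation $\pi$ of $[n]$ and exhibit $bm$ pairwise disjoint events, $b$ per set $A_i$, each of probability $1/\binom{n}{|A_i|}$. Write $k_i := |A_i|$. Applying the hypothesis to any $A_j$ with $j \ne i$ forces $k_i \ge b$ and $n - k_i \ge b$, so in particular $k_i + (b-1) \le n - 1$. For each $i \in [m]$ and each shift $s \in \{0, 1, \ldots, b-1\}$, define the event
$$E_i^{(s)} : \; A_i = \{\pi(1+s),\, \pi(2+s),\, \ldots,\, \pi(k_i+s)\}.$$
Since $\{\pi(1+s), \ldots, \pi(k_i+s)\}$ is a uniformly random $k_i$-subset of $[n]$, we have $\Prob[E_i^{(s)}] = 1/\binom{n}{k_i}$.

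The crux of the proof is to show that these $bm$ events are pairwise disjoint. Suppose $E_i^{(s)}$ and $E_{i'}^{(s')}$ both occur, and let $I = [1+s,\, k_i+s]$ and $I' = [1+s',\, k_{i'}+s']$ denote the two position windows. Because $\pi$ is a bijection, $|A_i \setminus A_{i'}| = |I \setminus I'|$ and $|A_{i'} \setminus A_i| = |I' \setminus I|$. Assuming $s \le s'$ without loss of generality, a short case analysis on the relative positioning of $I$ and $I'$ (disjoint, nested, or properly overlapping) will show that in every configuration one of $|I \setminus I'|, |I' \setminus I|$ is at most $b - 1$. The key geometric input is that both windows begin within the length-$b$ range $\{1, 2, \ldots, b\}$, so the shift gap $s' - s$ is at most $b - 1$; the three cases yield, respectively, $|I \setminus I'| = k_i \le s' - s$, a zero set difference on one side, and $|I \setminus I'| = s' - s$. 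For $i \ne i'$ this contradicts $|A_i \setminus A_{i'}|, |A_{i'} \setminus A_i| \ge b$; and for $i = i'$ with $s \ne s'$ the identities $|I \setminus I'| = |I' \setminus I| = 0$ would force $I = I'$ and hence $s = s'$, a contradiction.

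With disjointness in hand,
$$\sum_{i=1}^m \frac{b}{\binom{n}{k_i}} \;=\; \sum_{i=1}^m \sum_{s=0}^{b-1} \Prob\bigl[E_i^{(s)}\bigr] \;=\; \Prob\Bigl[\bigcup_{i,\, s} E_i^{(s)}\Bigr] \;\le\; 1,$$
which is the desired inequality.

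The main obstacle will be keeping the case analysis complete and transparent, particularly the proper-overlap subcase: there the shortfall $|I \setminus I'| = s' - s$ is pinned exactly to the shift gap, and the $b$-separation hypothesis is used precisely on the nose. The disjoint subcase likewise requires noticing that disjointness of $I$ and $I'$ forces one of $k_i, k_{i'}$ below $b$, using the same length-$b$ starting window, which is why $s$ must range over all of $\{0, \ldots, b-1\}$ rather than a smaller set.
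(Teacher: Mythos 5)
Your proof is correct. It belongs to the same LYM-style permutation-counting family as the paper's argument (which adapts the circular-permutation proof of van Lint and Wilson's Theorem 6.6), but the mechanism is genuinely different and, in my view, a bit cleaner. The paper places a random permutation on a \emph{circle}, declares $A_i\in\pi$ when $A_i$ occupies a consecutive arc, observes that the $b$-separation hypothesis forces the left endpoints of any two such arcs to differ by at least $b$ modulo $n$ (so at most $\lfloor n/b\rfloor$ sets fit per circular permutation), and then double-counts; the factor $b$ emerges from $n/\lfloor n/b\rfloor\ge b$. You instead work with a \emph{linear} permutation, attach to each $A_i$ exactly $b$ shifted position windows starting at $1,\dots,b$, and show the resulting $bm$ events are pairwise disjoint, so their probabilities sum to at most $1$. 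Your case analysis is sound: the disjoint-windows case is vacuous because $k_i\ge b$ while disjointness would force $k_i\le s'-s\le b-1$; the nested case kills one set difference outright; and the proper-overlap case gives $|I\setminus I'|=s'-s\le b-1<b$; the $i=i'$, $s\ne s'$ case is excluded since $\pi^{-1}(A_i)$ is a unique interval. What your route buys is the exact constant $b$ without the circular setup or the floor function, and the preliminary observations $k_i\ge b$ and $n-k_i\ge b$ (needed so the windows fit in $[n]$) are correctly extracted from the hypothesis with $m\ge 2$. What the paper's route buys is a more direct lineage to the standard Lubell argument it cites. Either proof is acceptable; just make sure the final write-up spells out the three interval cases rather than asserting that the analysis ``will show'' the claim.
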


\begin{proof}
Our proof is an adaptation of the proof of~\cite[Theorem~6.6]{LW01}.
Let $\pi$ be a permutation of $[n]$ placed on a circle and let us say that
$A_i \in \pi$ if the elements of $A_i$ occur consecutively somewhere on that
circle.
Then each subset $A_i \in \pi$ corresponds to a closed circular arc with
endpoints in $[n]$.
For any two subsets $A_i$ and $A_j$ in $\pi$,
the condition
$|A_i \setminus A_j| \ge b$
and
$|A_j \setminus A_i| \ge b$
requires that the left (respectively, right) endpoints of the corresponding two
circular arcs on the circle differ by at least $b$ modulo $n$.
Therefore, if $A_i \in \pi$, then $A_j \in \pi$ for at most
$\lfloor n/b \rfloor$ values of $j$ including $i$.

Now define $f(\pi,i) = \frac{1}{\lfloor n/b \rfloor}$ if $A_i \in \pi$,
and $f(\pi,i) = 0$ otherwise.
By the argument above, we have $\sum_{\pi} \sum_{i=1}^m f(\pi, i) \le n!$.
Following a different order to evaluate the double summation, we can count,
for each fixed $A_i$, and for each fixed circular arc of $|A_i|$ consecutive
elements out of $n$ elements on the circle,
the number of permutations $\pi$ such that $A_i$ corresponds to the circular
arc,
which is exactly $|A_i|!(n - |A_i|)!$.
So we have
$$
\sum_{i=1}^m n\cdot |A_i|!(n - |A_i|)! \cdot \frac{1}{\lfloor n/b \rfloor}
\le n!,
$$
which yields the result.
\end{proof}

\paragraph{Upper bound.}
We now proceed to prove the upper bound in Theorem~\ref{thm:p(a,2,n)}.
Let $\P$ be a family of $a$-partitions of $[n]$ that pairwise properly overlap.
Then each part of any $a$-partition in $\P$ must have at least $a$ elements
to intersect the $a$ disjoint parts of any other $a$-partition in $\P$.
Thus for any two parts $A_i$ and $A_j$ of the same $a$-partition,
$|A_i \setminus A_j| = |A_i| \ge a$
and
$|A_j \setminus A_i| = |A_j| \ge a$.
On the other hand,
for any two parts $A_i$ and $A_j$ of two different $a$-partitions,
we must have
$|A_i \setminus A_j| \ge a-1$
so that $A_i$ can intersect the other $a-1$ parts of the $a$-partition that
includes $A_j$,
and symmetrically,
$|A_j \setminus A_i| \ge a-1$.
Thus the family of subsets in all $a$-partitions in $\P$ satisfies the
condition of Lemma~\ref{lem:pack} with $b = a-1$.
It follows that
$$
\sum_{\A\in\P}\sum_{A_i\in\A} \frac{a-1}{{n\choose |A_i|}} \le 1.
$$
Then, by Lemma~\ref{lem:sum}, we have
$$
|\P|\cdot \frac{a(a-1)}{{n\choose \lceil n/a \rceil}}
\le
\sum_{\A\in\P}\sum_{A_i\in\A} \frac{a-1}{{n\choose |A_i|}} \le 1.
$$
Thus the size of $\P$ is at most
${n\choose \lceil n/a \rceil}/(a(a-1))$.
Note that this upper bound matches our upper bound of
${n-1\choose \lfloor n/2 \rfloor - 1}$ when $a = 2$ and $n$ is even,
and improves the upper bound of 
${n-1\choose \lfloor n/a \rfloor - 1}$
by a factor of $\frac1{a-1}$ when $n$ is a multiple of $a$.

\section{Connections to classical concepts in extremal set theory}

A family $\A$ of sets is an \emph{antichain}
if for any two sets $U$ and $V$ in $\A$,
neither $U \subseteq V$ nor $V \subseteq U$ holds.
For $l \ge 1$,
a sequence $\langle T_0,T_1,\ldots,T_l \rangle$ of $l + 1$ sets
is an \emph{$l$-chain} (a chain of length $l$) if
$T_0 \subset T_1 \subset \ldots \subset T_l$.
A family of sets is said to be \emph{$r$-chain-free}
if it contains no chain of length $r$;
in particular, every antichain is $1$-chain-free.
Sperner~\cite{Sp28} bounded the largest size of an antichain $\A$
consisting of subsets of $[n]$:
$$
|\A| \le {n \choose \lfloor n/2 \rfloor},
$$
where equality is attained, for example, when $\A$ is the family of all
subsets of $[n]$ with exactly $\lfloor n/2 \rfloor$ elements.
Bollob\'as~\cite{Bo65}, Lubell~\cite{Lu66}, Yamamoto~\cite{Ya54},
and Meshalkin~\cite{Me63} independently discovered a stronger result
known as the LYM inequality:
$$
\sum_{A\in\A} \frac1{{n \choose |A|}} \le 1.
$$

For $p \ge 2$,
a \emph{$p$-composition} of a finite set $S$
is an ordered $p$-partition of $S$, that is,
a tuple $(A_1,\ldots,A_p)$ of $p$ disjoint sets whose union is $S$.
For any family $\A$ of $p$-compositions $A = (A_1,\ldots,A_p)$ of $[n]$,
the $i$th component of $\A$, $1 \le i \le p$,
is the family $\A_i := \{ A_i \mid A \in \A \}$
of subsets of $[n]$.
Meshalkin~\cite{Me63} proved that if each component $\A_i$, $1 \le i \le p$,
is an antichain, then the maximum size of a family $\A$ of $p$-compositions
is the largest $p$-multinomial coefficient
$$
|\A| \le {n \choose n_1,\ldots,n_p},
$$
where the $p$ integers $n_i$ sum up to $n$,
and any two of them differ by at most $1$.
Beck and Zaslavsky~\cite{BZ02} subsequently obtained an equality
on componentwise-$r$-chain-free families of $p$-compositions,
which subsumes the Meshalkin bound (as the $r=1$ case)
and generalizes the LYM inequality:
$$
\sum_{(A_1,\ldots,A_p)\in\A} \frac1{{n \choose |A_1|,\ldots,|A_p|}} \le r^{p-1}.
$$

Our concept of $t$-wise properly overlapping $a$-partitions
is analogous to the classical concept of componentwise-$r$-chain-free
$p$-compositions
when $t = 2$, $r = 1$, and $a = p$.
The difference in this case is that we consider unordered partitions and
require that all parts of all partitions pairwise overlap
and hence form an antichain
(as shown in the proof of Theorem~\ref{thm:p(n)}),
whereas Meshalkin~\cite{Me63} considers ordered partitions and
requires that in each component the corresponding parts
of all partitions form an antichain.

\paragraph{Added note.} 
After completion of the work on this manuscript, we learned that 
some of our results have been obtained earlier, in the the so-called framework of
``qualitative independent sets and partitions''.
More precisely, our properly $t$-wise overlapping partitions
have been sometimes referred to as qualitative $t$-independent partitions
or simply $t$-independent partitions in prior work.
For instance, it is worth pointing out that our Theorem~\ref{thm:p(n)}
was independently discovered by four papers with different
motivations~\cite{Bo73,BD72,Ka73,KS73}; see also~\cite{GKV92,GKV93,Ka91,KS92,PT89}
for other related results.
We also note that: (i)~the lower bound in~\cite[Theorem~4]{PT89} is a special case of
the explicit lower bound in our Theorem~\ref{thm:p(a,2,n)};
(ii)~the lower bound in~\cite[Theorem~5]{PT89} is analogous (and also obtained by
a probabilistic argument) to the lower bound in our Theorem~\ref{thm:p(a,t,n)}.
While some of our bounds are superseded by bounds in earlier papers
(\eg, the upper bound in~\cite[Theorem~1]{PT89} is stronger than the upper bounds in our
Theorems~\ref{thm:p(a,t,n)} and~\ref{thm:p(a,2,n)}), overall our results cover a broad landscape;
% cover a large area and paint a broad landscape;
as such, the writing has been left unaltered.  
Our main focus has been determining the asymptotic growth rate of $p(a,t,n)$
for fixed $a$ and $t$; Theorems~\ref{thm:p(n)},~\ref{thm:p(a,t,n)}, and~\ref{thm:p(a,2,n)}
provide the answers we need; their implications and connections with the maximum empty box
problem are discussed in the next section. %Section~\ref{sec:remarks}.

\section{Connections to maximum empty box and concluding remarks} \label{sec:remarks}

Our motivation for studying perfect vector sets and properly overlapping partitions
was determining whether the growth rate of $p(a,t,n)$ is exponential in $n$,
and its relation to the growth rate of $A_d(n)$ as a function in $d$. 
We next show within our framework of perfect vectors sets
(or that of properly overlapping partitions)
that a subexponential growth in $n$ of $p(a,t,n)$ would imply 
a superlogarithmic growth in $d$ of the maximum volume $A_d(n)$ via an argument
similar to that employed in the proof of Theorem~\ref{thm:large}; see also~\cite{AHR15}.

In the proof of Theorem~\ref{thm:large}, we have set $\ell=\lfloor \log{d} \rfloor$
and found a box $B$ containing exactly $\ell$ points in its interior
and with $\vol(B) \geq \frac{\ell+1}{n+\ell+1}$. We then encoded the $\ell$ points in $B$
by $d$ binary vectors of length $\ell$, $\V=\{\mathbf{v_1},\ldots,\mathbf{v_d}\}$. 
If $\V$ is perfect, we have $p(n) \leq 2^{n-1}$ by Theorem~\ref{thm:p(n)} if $n \geq 4$;
when applied to $\V$, this yields $d \leq 2^{\ell-1}$ and further that $\ell \geq \log{d} + 1$,
which is a contradiction. Thus $\V$ is imperfect, in which case 
an uncovered binary combination yields an empty box of volume $\vol(B)/4$ and we are done.

Similarly, assume for example that $p(a,t,n) < n^c$, for some $a,t \geq 2$, and a positive constant
$c>1$.  Set $\ell = \lfloor d^{1/c} \rfloor$ and proceed as above to find a box $B$
containing exactly $\ell$ points in its interior and with $\vol(B) \geq \frac{\ell+1}{n+\ell+1}$.
Encode the $\ell$ points in $B$ by $d$ vectors of length $\ell$ over $\Sigma_a=\{0,1,\ldots,a-1\}$
using the coordinates of the points and a uniform subdivision in $a$ parts of each 
extent of $B$; let $\V=\{\mathbf{v_1},\ldots,\mathbf{v_d}\}$. 
The $j$th bit of the $i$th vector, for $j=1,\ldots,\ell$,
is set to $k \in \{0,1,\ldots,a-1\}$ depending on whether the $i$th coordinate of the $j$th point
lies in the $(k+1)$th subinterval of the $i$th extent.
If $\V$ is perfect, since $p(a,t,n) < n^c$ by the assumption, 
this implies $d < \ell^c$, or $\ell > d^{1/c}$, which is a contradiction.
It follows that $\V$ is imperfect, in which case 
an uncovered $t$-wise combination yields an empty box of volume
$a^{-t} \, \vol(B) \geq a^{-t}  d^{1/c} /n$ and we are done.

By Theorem~\ref{thm:p(a,t,n)}, the growth rate of $p(a,t,n)$ is exponential in $n$,
and so the above scenario does not materialize.
This may suggest that $A_d(n)$ is closer to $\Theta \left(\frac{\log{d}}{n}\right)$
than to the upper bound in~\eqref{eq:upper} which is exponential in $d$.
In particular, it would be interesting to establish whether $A_d(n) \leq d^{O(1)} /n$. 

Recall that we have $A_d(\lfloor \log{d} \rfloor) =\Omega(1)$, as proved by 
Aistleitner~\etal~\cite{AHR15}; this gives a partial answer in relation to one of our
earlier open problems from~\cite{DJ13a}, namely whether $A_d(d) =\Omega(1)$;
this latter problem remains open. Under any circumstances, 
determining the asymptotic behavior of $A_d(n)$ remains an exciting open problem.

\paragraph{Acknowledgment.} We are grateful to Gyula Katona for bringing several articles
on qualitative independent sets and partitions to our attention.


\begin{thebibliography}{99}
%\itemsep 2pt

\bibitem{AKM+87}
A. Aggarwal, M. Klawe, S. Moran, P. Shor and R. Wilber,
Geometric applications of a matrix-searching algorithm,
\emph{Algorithmica}
\textbf{2} (1987), 195--208.   

\bibitem{AK90}
A. Aggarwal and M. Klawe,
Applications of generalized matrix searching to geometric algorithms,
\emph{Discrete Appl. Math.}
%\emph{Discrete Applied Mathematics}
\textbf{27} (1990), 3--23.

\bibitem{AS87}
A. Aggarwal and S. Suri,
Fast algorithms for computing the largest empty rectangle,
in: \emph{Proc. 3rd Ann. Sympos. on Comput. Geom.},
1987, pp.~278--290.

\bibitem{AHR15}
C. Aistleitner, A. Hinrichs, and D. Rudolf,
On the size of the largest empty box amidst a point set,
preprint, 2015, \verb+http://arxiv.org/abs/1507.02067v1+.

\bibitem{AS00}
N.~Alon and J.~Spencer,
\emph{The Probabilistic Method},
second edition, Wiley, New York, 2000.

\bibitem{AF86}
M. J. Atallah and G. N. Frederickson,
A note on finding a maximum empty rectangle,
\emph{Discrete Appl. Math.}
%\emph{Discrete Applied Mathematics}
\textbf{13(1)} (1986), 87--91.

\bibitem{BZ02}
M. Beck and T. Zaslavsky,
A shorter, simpler, stronger proof of the Meshalkin-Hochberg-Hirsch bounds on
componentwise antichains,
\emph{Journal of Combinatorial Theory, Series A},
\textbf{100} (2002), 196--199.

\bibitem{Bo65}
B. Bollob\'as,
On generalized graphs,
\emph{Acta Mathematica Academiae Scientiarum Hungaricae}
\textbf{16} (1965), 447--452.

\bibitem{Bo73}
B. Bollob\'as,
Sperner systems consisting of pairs of complementary subsets,
\emph{Journal of Combinatorial Theory, Series A},
\textbf{15} (1973), 363--366.

\bibitem{Bo86}
B. Bollob\'as,
Combinatorics: Set Systems, Hypergraphs, Families of Vectors, and
Combinatorial Probability,
Cambridge University Press, 1986. 

\bibitem{BD72}
B. A. Brace and D. E. Daykin,
Sperner type theorems for finite sets,
\emph{Combinatorics}
(Proc. Conf. Combinatorial Math., Oxford, 1972),
Inst. Math. Appl., Southend-on-Sea, 1972, 18--37.

\bibitem{CDL86}
B. Chazelle, R. Drysdale and D. T. Lee,
Computing the largest empty rectangle, 
%\emph{SIAM J. Comput.}
\emph{SIAM Journal on Computing}
\textbf{15} (1986), 300--315.

\bibitem{CLRS09} T.~H.~Cormen, C.~E.~Leiserson, R.~L.~Rivest, and C.~Stein,
\emph{Introduction to Algorithms}, 
3rd~edition, MIT Press, Cambridge, 2009.

\bibitem{C35a}
J. G. van der Corput,
Verteilungsfunktionen I.,
\emph{Proc.\ Nederl.\ Akad.\ Wetensch.}, 
\textbf{38} (1935), 813--821.

\bibitem{C35b}
J. G. van der Corput,
Verteilungsfunktionen II.,
\emph{Proc.\ Nederl.\ Akad.\ Wetensch.}, 
\textbf{38} (1935), 1058--1066.

\bibitem{DS00}
A. Datta and S. Soundaralakshmi,
An efficient algorithm for computing the
maximum empty rectangle in three dimensions,
\emph{Inform. Sci.}
%\emph{Information Sciences}
\textbf{128} (2000), 43--65.

\bibitem{DJ13a}
A. Dumitrescu and M. Jiang,
On the largest empty axis-parallel box amidst $n$ points,
{\em Algorithmica}
\textbf{66(2)} (2013), 225--248.   

\bibitem{DJ14}
A. Dumitrescu and M. Jiang,
Computational Geometry Column 60,
\emph{SIGACT News Bulletin}
\textbf{45(4)}, 2014, 76--82.

\bibitem{DJ16a}
A. Dumitrescu and M. Jiang,
On the number of maximum empty boxes amidst $n$ points,
{\em Proc. 32nd Ann. Sympos. Comput. Geometry}, June 2016, 
Leibniz International Proceedings in Informatics (LIPIcs) series, 
Schloss Dagstuhl; DOI: 10.4230/LIPIcs.SoCG.2016.36.

\bibitem{EGLM03}
J. Edmonds, J. Gryz, D. Liang, and R. Miller,
Mining for empty spaces in large data sets,
\emph{Theoret. Comp. Sci.}
%\emph{Theoretical Computer Science}
\textbf{296(3)} (2003), 435--452.

\bibitem{GKV92}
L. Gargano, J. K\"orner, and U. Vaccaro,
Qualitative independence and Sperner problems for directed graphs,
\emph{Journal of Combinatorial Theory, Series A}, 
\textbf{61} (1992), 173--192.

\bibitem{GKV93}
L. Gargano, J. K\"orner, and U. Vaccaro,
Sperner capacities,
\emph{Graphs and Combinatorics}, 
\textbf{9(1)} (1993), 31--46.

\bibitem{Hal60}
J. H. Halton,
On the efficiency of certain quasi-random sequences of points in
evaluating multi-dimensional integrals, 
\emph{Numer.\ Math.}
\textbf{2} (1960), 84--90.

\bibitem{Ham60}
J. M. Hammersley,
Monte Carlo methods for solving multivariable problems,
\emph{Ann.\ New York Acad.\ Sci.}
\textbf{86} (1960), 844--874.

\bibitem{KMNS12}
H. Kaplan, S. Mozes, Y. Nussbaum, and M. Sharir,
Submatrix maximum queries in Monge matrices and Monge partial matrices,
and their applications,
in \emph{Proc. 23rd ACM-SIAM Sympos. on Discrete Algorithms},
2012, pp.~338--355.

\bibitem{KRSV08}
H. Kaplan, N. Rubin, M. Sharir, and E. Verbin,
Efficient colored orthogonal range counting,
\emph{SIAM J. Comput.}
%\emph{SIAM Journal on Computing}
\textbf{38} (2008), 982--1011.

\bibitem{Ka73}
G. O. H. Katona,
Two applications (for search theory and truth functions) of Sperner type theorems,
\emph{Periodica Mathematica Hungarica}
\textbf{3(1-2)} (1973), 19--26.

\bibitem{Ka91}
G. O. H. Katona,
R\'enyi and the combinatorial search problems,
\emph{Studia Scientiarum Mathematicarum Hungarica}
\textbf{26} (1991), 363--378.

\bibitem{KS73}
D. Kleitman and J. Spencer,
Families of $k$-independent sets,
\emph{Discrete Mathematics} \textbf{6(3)} (1973), 255--262.

\bibitem{KS92}
J. K\"orner and G. Simonyi,
A Sperner-type theorem and qualitative independence,
\emph{Journal of Combinatorial Theory, Series A}, 
\textbf{59} (1992), 90--103.

\bibitem{LW01}
J. H. van Lint and R. M. Wilson,
\emph{A Course in Combinatorics},
Cambridge University Press, 2nd edition, New York, 2001.

\bibitem{Lu66}
D. Lubell,
A short proof of Sperner's lemma,
\emph{Journal of Combinatorial Theory}
\textbf{1} (1966), 299.

\bibitem{Ma99}
J. Matou\v{s}ek,
\emph{Geometric Discrepancy: An Illustrated Guide},
Springer, 1999.

\bibitem{Me63}
L. D. Meshalkin,
Generalization of Sperner's theorem on the number of subsets of a finite set,
\emph{Theory of Probability and its Applications},
\textbf{8} (1963), 203--204.

\bibitem{NLH84}
A. Namaad, D. T. Lee, and  W.-L. Hsu,
On the maximum empty rectangle problem,
\emph{Discrete Appl. Math.}
%\emph{Discrete Applied Mathematics}
\textbf{8} (1984), 267--277.

\bibitem{PT89}
S. Poljak and Z. Tuza,
On the maximum number of qualitatively independent partitions,
\emph{Journal of Combinatorial Theory, Series A}, 
\textbf{51} (1989), 111--116.

\bibitem{RT96}
G. Rote and R. F. Tichy, 
Quasi-Monte-Carlo methods and the dispersion of point sequences,
\emph{Math. Comput. Modelling}
%\emph{Mathematical and Computer Modelling}
\textbf{23} (1996), 9--23. 

\bibitem{SA95}
M. Sharir and P. K. Agarwal,
\emph{Davenport-Schinzel Sequences and Their Geometric Applications},
Cambridge University Press, Cambridge, 1995.

\bibitem{Sp28}
E. Sperner,
Ein Satz \"uber Untermengen einer endlichen Menge,
\emph{Mathematische Zeitschrift}
\textbf{27} (1928), 544--548.

\bibitem{Ya54}
K. Yamamoto,
Logarithmic order of free distributive lattice,
\emph{Journal of the Mathematical Society of Japan}
\textbf{6} (1954), 343--353.

\end{thebibliography}
\end{document}